\def\th@plain{%
  \upshape 
}
\renewenvironment{proof}[1][\proofname]{\par
  \pushQED{\qed}%
  \normalfont \topsep6\p@\@plus6\p@\relax
  \trivlist
  \item[\hskip\labelsep
        \bfseries
    #1\@addpunct{.}]\ignorespaces
}{%
  \popQED\endtrivlist\@endpefalse
}
\newtheorem{thm}{Theorem}[section]
\newtheorem{cor}[thm]{Corollary}
\newtheorem{lem}[thm]{Lemma}
\newtheorem{conj}[thm]{Conjecture}
\numberwithin{equation}{section}
\newcommand{\li}{\mathcal{L}}
\numberwithin{equation}{section}
\begin{document}
\title{Group edge choosability of planar graphs without adjacent short cycles\footnotetext{Email addresses: sdu.zhang@yahoo.com.cn (X. Zhang), gzliu@sdu.edu.cn (G. Liu)}\thanks{This research is supported by NSFC (10971121, 61070230), RFDP(20100131120017) and GIIFSDU (yzc10040).}}
\author{Xin Zhang and Guizhen Liu\thanks{Corresponding author.}\\[.5em]
{\small School of Mathematics, Shandong University, Jinan 250100, P. R. China}\\
}
\date{}
\maketitle

\begin{abstract}
In this paper, we aim to introduce the group version of edge coloring and list edge coloring, and prove that all 2-degenerate graphs along with some planar graphs without adjacent short cycles is group $(\Delta(G)+1)$-edge-choosable
while some planar graphs with large girth and maximum degree is group $\Delta(G)$-edge-choosable.
\\[.5em]
\textbf{Keywords}: group edge coloring, list coloring, planar graphs, short cycles.\\[.5em]
\textbf{MSC}: 05C15, 05C20.

\end{abstract}

\section{Introduction}

All graphs considered in this paper are finite, simple and undirected. We use $V(G)$, $E(G)$, $\delta(G)$ and $\Delta(G)$ to denote the vertex set, the edge set, the minimum degree and the maximum degree of a graph $G$. By $d_G(v)$, we denotes the degree of $v$ in $G$.
For a plane graph $G$, $F(G)$ denotes its face set and $d_G(f)$ denotes the degree of a face $f$ in $G$. The girth $g(G)$ of a graph $G$ is the length of its smallest cycle or $+\infty$ if $G$ is a forest. Throughout this paper, a $k$-, $k^+$- and $k^-$-vertex (resp. face) is a vertex (resp. face) of degree $k$, at least $k$ and at most $k$.
An $i$-alternating cycles in a graph $G$ is a cycle of even length in which alternate vertices have degree $i$. We say a graph $G$ is $k$-degenerate if $\delta(H)\leq k$ for every subgraph $H\subseteq G$.
Any undefined notation follows that of Bondy and Murty \cite{Bondy}.

In 1992, Jaeger \emph{et al.} \cite{Jaeger1992} introduced a concept of group connectivity as an generalization of nowhere zero flows and its dual concept group coloring. They proposed the definition of group colorability of graphs as the equivalence of group connectivity of $M$, where $M$ is a cographic matroid.
Let $G$ be a graph and $A$ be an Abelian group. Denote $F(G,A)$ to be the set of all functions $f: E(G)\mapsto A$ and $D$ to be an arbitrary orientation of $E(G)$. We say $G$ is $A$-colorable under the orientation $D$ if for any function $f\in F(G,A)$, $G$ has an $(A,f)$-coloring, namely, a vertex coloring $c: V(G)\mapsto A$ such that $c(u)-c(v)\neq f(uv)$ for every directed edge $uv$ from $u$ to $v$. In \cite{Lai}, Lai and Zhang presented that for any Abelian group $A$, a graph $G$ is $A$-colorable under the orientation $D$ if and only if $G$ is $A$-colorable under every orientation of $E(G)$. That is to say, the group colorability of a graph is independent of the orientation of $E(G)$. The group chromatic number of a graph $G$, denoted by $\chi_g(G)$, is defined to be the minimum $m$ for which $G$ is $A$-colorable for any Abelian group $A$ of order at least $m$. Clearly, $\chi(G)\leq \chi_g(G)$, where $\chi(G)$ is the chromatic number of $G$. Lai and Zhang \cite{Lai2002} proved that $\chi_g(G)\leq 5$ for every planar graph $G$ and Kr\'al' \emph{et al.} \cite{Kral2005} constructed a planar graph with the group chromatic number five. This implies the well-known Four-Colors Theorem for ordinary colorings can not be extended to group colorings. Nevertheless, some theorems for ordinary vertex colorings, such as Brooks' Theorem, still can be extended. The following theorem is due to Lai \emph{et al.} \cite{Lai2007}.

\begin{thm}\label{thm:brook}
For any connected simple graph $G$, $\chi_g(G)\leq \Delta(G)+1$, where equality holds if and only if $G$ is either a cycle or a complete graph.
\end{thm}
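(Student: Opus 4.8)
The plan is to mirror Lovász's short proof of Brooks' theorem, exploiting two elementary features of group colourings that stand in for the recolouring symmetries available for ordinary colourings: an $(A,f)$-colouring can be built greedily, one vertex at a time, since each already-coloured neighbour of a vertex $v$ rules out exactly one value for $c(v)$; and adding a fixed element of $A$ to the colours of all vertices carries $(A,f)$-colourings to $(A,f)$-colourings. The upper bound $\chi_g(G)\le\Delta(G)+1$ follows at once from the first feature: colour the vertices in any order, and when $v$ is reached at most $d_G(v)\le\Delta(G)$ values are forbidden while $|A|\ge\Delta(G)+1$. For the ``if'' direction of the characterisation, $\chi(K_{\Delta+1})=\Delta+1$ forces $\chi_g(K_{\Delta+1})=\Delta+1$; and for a cycle $C_n$ one orients it cyclically and chooses $f$ with $\sum_e f(e)\not\equiv n\pmod 2$, so that over $\mathbb{Z}_2$ the conditions $c(u)-c(v)\ne f(uv)$ become the linear system $c(v_i)+c(v_{i+1})=1+f(v_iv_{i+1})$, whose consistency --- read off by summing over the cycle --- would require $\sum_e f(e)\equiv n\pmod2$; hence no $(\mathbb{Z}_2,f)$-colouring exists and $\chi_g(C_n)=3=\Delta(C_n)+1$.

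For the ``only if'' direction I would prove, by induction on $|V(G)|$, that every connected graph $G$ that is neither complete nor a cycle satisfies $\chi_g(G)\le\Delta(G)=:k$. If $k\le2$ then $G$ is a path on at least three vertices and an endpoint yields a greedy order. If $\delta(G)<k$, run a breadth-first search from a vertex of degree $<k$ and colour greedily in the reverse of the search order: each non-root vertex then has an as-yet-uncoloured parent neighbour, so at most $k-1$ values are forbidden, and the root has fewer than $k$ forbidden values while $|A|\ge k$. If $G$ is not $2$-connected, use its block decomposition: every block $B$ contains a cut-vertex of $G$, hence a vertex with more neighbours in $G$ than in $B$, whence $B\ne K_{k+1}$; if $\Delta(B)<k$ then $\chi_g(B)\le\Delta(B)+1\le k$, and otherwise $B$ is a smaller connected graph, neither complete nor a cycle, with $\Delta(B)=k$, so $\chi_g(B)\le k$ by induction --- and since a constant shift lets each block match the colour already chosen for its cut-vertex, these colourings assemble into an $(A,f)$-colouring of $G$. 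So we may assume $G$ is $2$-connected, $k$-regular and $k\ge3$.

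In this remaining case I would invoke Lovász's structural lemma: a $2$-connected graph that is neither complete nor a cycle has a vertex $v$ with two non-adjacent neighbours $x,y$ such that $G-\{x,y\}$ is connected. Given any such $A$, any orientation and any $f\in F(G,A)$, list the vertices as $x,y$ followed by the vertices of $G-\{x,y\}$ ordered so that each precedes its parent in a spanning tree rooted at $v$ (so $v$ comes last). Colour $x$ and $y$ first, choosing $c(x),c(y)$ so that the edges $vx$ and $vy$ forbid the \emph{same} value at $v$: whatever the orientations of these two edges, this requirement is a single equation of the form $c(x)-c(y)=\gamma$ for a prescribed $\gamma\in A$, hence solvable. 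Then colour the remaining vertices other than $v$ in order: each has an uncoloured parent neighbour, hence at most $k-1$ forbidden values. Finally colour $v$: all $k$ of its neighbours are already coloured, but $x$ and $y$ forbid the same value, so at most $k-1$ values are forbidden and one remains. Thus $G$ is $(A,f)$-colourable for every admissible $A$ and $f$, i.e.\ $\chi_g(G)\le k$.

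The one genuinely substantial ingredient is Lovász's lemma (the existence of $v,x,y$ with $G-\{x,y\}$ connected), which I would cite rather than reprove. The only point at which the group structure does essential work --- where ordinary Brooks simply gives $x$ and $y$ a common colour --- is the remark that $c(x),c(y)$ can always be picked so that $vx$ and $vy$ impose identical constraints at $v$; together with the constant-shift trick used to glue block colourings across cut-vertices, everything else is the familiar greedy accounting.
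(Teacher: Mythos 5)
Your proposal is correct, but note that the paper does not actually prove Theorem \ref{thm:brook}: it is quoted as a known result of Lai, Li and Yu \cite{Lai2007}, so there is no in-paper argument to compare yours against. Judged on its own, your Lov\'asz-style adaptation is sound, and the group-specific ingredients you lean on all check out: each already-coloured neighbour $u$ forbids exactly one value at $v$ (namely $c(u)+f(vu)$ or $c(u)-f(uv)$, depending on the orientation of the edge), so the greedy bound and the BFS/reverse-order argument for $\delta(G)<\Delta(G)$ go through; translating every colour by a fixed group element preserves $(A,f)$-colourings, which is exactly what makes the block-by-block gluing legitimate since each edge of $G$ lies in a single block; the parity computation over the two-element group shows that every cycle, odd or even, admits an $f$ with no colouring, giving the lower bound $3$ that ordinary colouring cannot supply for even cycles; and because $x$ and $y$ are non-adjacent, the requirement that $vx$ and $vy$ forbid the same value at $v$ reduces to one solvable equation $c(x)-c(y)=\gamma$, after which $v$ sees at most $\Delta(G)-1$ distinct forbidden values. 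The only imported ingredient, Lov\'asz's lemma producing $v,x,y$ with $xy\notin E(G)$ and $G-\{x,y\}$ connected in a $2$-connected, non-complete graph of maximum degree at least $3$, is standard and reasonable to cite. In short, your write-up would serve as a self-contained proof of a statement the paper only cites.
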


\noindent Here notice that for an even cycle $C_{2n}$, we have $\chi_g(C_{2n})=3$ by Theorem \ref{thm:brook} but $\chi(C_{2n})=2$.

In 2004, Kr\'al' and Nejedl\'y \cite{Kral2004} considered list group coloring as an extension of list coloring and group coloring. Let $G$ be a graph, $A$ be an Abelian group of order at least $k$ and $L: V(G)\mapsto 2^A$ be a $k$-uniform list assignment of $V(G)$. Denote $F(G,A)$ to be the set of all functions $f: E(G)\mapsto A$ and $D$ to be an arbitrary orientation of $E(G)$. We say $G$ is group $k$-choosable under the orientation $D$ if for any function $f\in F(G,A)$, $G$ has an $(A,L,f)$-coloring, that is an $(A,f)$-coloring $c$ such that $c(v)\in L(v)$ for every $v\in V(G)$. Note that the choice of an orientation of edges of $G$ is either not essential in this definition. The group choice number of a graph $G$, denoted by $\chi_{gl}(G)$, is defined to be the minimum $k$ for which $G$ is group $k$-choosable.
In \cite{Kral2004}, the authors showed that $\chi_{gl}(G)=2$ if and only if $G$ is a forest. Omidi \cite{Omidi2010} proved the group choice number of a graph without $K_5$-minor or $K_{3,3}$-minor and with girth at least 4 (resp. 6) is at most 4 (resp. 3). In \cite{Chuang}, Chuang \emph{et al.} also established the group choosability version of Brooks' Theorem, which extends Theorem \ref{thm:brook}.

\begin{thm}\label{thm:brook.list}
For any connected simple graph $G$, $\chi_{gl}(G)\leq \Delta(G)+1$, where equality holds if and only if $G$ is either a cycle or a complete graph.
\end{thm}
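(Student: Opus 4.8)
The plan is to establish the bound $\chi_{gl}(G)\le\Delta(G)+1$ first, and then each direction of the equality characterization, the ``only if'' part being an adaptation of Brooks' theorem. For the bound, fix an Abelian group $A$ with $|A|\ge\Delta(G)+1$, a $(\Delta(G)+1)$-uniform list assignment $L$, an orientation $D$ and $f\in F(G,A)$, and color the vertices greedily in an arbitrary order: when a vertex $v$ is reached, each already colored neighbor $u$ of $v$ excludes exactly one value of $A$ as a color of $v$ (the value of $c(v)$ forbidden by the constraint on the edge $uv$, whichever way it is oriented), so at most $\Delta(G)$ of the $\Delta(G)+1$ colors in $L(v)$ are forbidden and $v$ can be colored. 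For the ``if'' part of the equality statement, note that group $k$-choosability implies group $k'$-choosability for every $k'\ge k$ (shrink each list to a $k$-element subset), so taking all lists equal to $A$ shows $\chi_g(G)\le\chi_{gl}(G)$; hence if $G$ is a cycle or a complete graph then $\Delta(G)+1=\chi_g(G)\le\chi_{gl}(G)\le\Delta(G)+1$ by Theorem~\ref{thm:brook}, which forces equality.

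For the ``only if'' direction I would prove the contrapositive: if $G$ is connected and neither a cycle nor a complete graph then $\chi_{gl}(G)\le\Delta$, where $\Delta:=\Delta(G)$; fix $A$ with $|A|\ge\Delta$, a $\Delta$-uniform $L$, an orientation and an $f$. If $\Delta\le2$ then $G$ is a path, hence $1$-degenerate, and coloring along it with $2$-uniform lists finishes; so assume $\Delta\ge3$. The basic tool is: if $H$ is connected, $|L'(u)|\ge d_H(u)$ for all $u$, and some vertex $w$ has $|L'(w)|>d_H(w)$ or is already colored, then ordering $V(H)$ by non-increasing distance from $w$ and coloring greedily works, because every other vertex has a neighbor nearer $w$, hence later in the order, hence at most $d_H(u)-1$ earlier neighbors, while $w$ has at most $|L'(w)|-1$ relevant neighbors. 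This disposes of the case ``$G$ not $\Delta$-regular'' at once (take $w$ of degree $<\Delta$), and it drives a block-by-block reduction of the remaining case: if $G$ is $\Delta$-regular with a cut vertex, peel off an end-block $B$ with cut vertex $z$; every vertex of $B$ other than $z$ has all its neighbors inside $B$, so $G':=G-(V(B)\setminus\{z\})$ is connected, has maximum degree $\Delta$ and the low-degree vertex $z$, hence is colored by the basic tool, after which $B$ can be colored with $z$ fixed once one knows that $B-z$, with the lists of the neighbors of $z$ shortened by one, is colorable (the edge sets of $B$ and $G'$ partition $E(G)$). The latter is a smaller instance of a statement best phrased as: a connected graph that is not a ``group Gallai tree'' (one all of whose blocks are complete graphs or cycles), equipped with lists at least as large as the degrees, has an $(A,L,f)$-coloring. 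A short check shows that a connected $\Delta$-regular graph with $\Delta\ge3$ that is a group Gallai tree must be $K_{\Delta+1}$, so the graphs of interest are never group Gallai trees.

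This reduces everything to the case $G$ connected, $2$-connected, $\Delta$-regular with $\Delta\ge3$, and $G\ne K_{\Delta+1}$ (so $G$ is not complete, and $\Delta\ge3$ makes it not a cycle). By the classical lemma behind Brooks' theorem there is a vertex $v$ with two non-adjacent neighbors $x,y$ such that $G-\{x,y\}$ is connected. Order the vertices as $x=v_1$, $y=v_2$, $v_3,\dots,v_n=v$, where $v_3,\dots,v_n$ is an ordering of $G-\{x,y\}$, with $v$ last, in which every vertex has a later neighbor. Color greedily: $v_1$ and $v_2$ receive arbitrary colors of their lists (they are non-adjacent), each $v_i$ with $3\le i\le n-1$ has a later neighbor and so at most $\Delta-1$ earlier ones, and the last vertex $v$ has exactly $\Delta$ neighbors, all colored, among them $x$ and $y$. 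The color forbidden at $v$ along $vx$ is $c(x)+s_x$ and along $vy$ is $c(y)+s_y$, for elements $s_x,s_y\in A$ determined by $f$ and the orientation. If $L(x)\cap(L(y)+(s_y-s_x))\ne\emptyset$, choose $c(x)$ in this intersection and $c(y)=c(x)-(s_y-s_x)\in L(y)$; then $x$ and $y$ forbid the same color at $v$, so at most $\Delta-1$ colors of $L(v)$ are barred and $v$ can be colored. Otherwise $L(x)+s_x$ and $L(y)+s_y$ are not both contained in $L(v)$, since if they were then, because $|L(x)|=|L(y)|=|L(v)|=\Delta$, each would equal $L(v)$, forcing $L(x)=L(y)+(s_y-s_x)$ and contradicting the emptiness of the intersection; so we may color (say) $x$ with $c(x)\in L(x)$ such that $c(x)+s_x\notin L(v)$, again leaving at most $\Delta-1$ colors of $L(v)$ barred, and $v$ can be colored. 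In either case the coloring is completed, which proves the theorem.

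I expect the main obstacle to be this final step: coloring the vertex $v$ all of whose neighbors are already colored and whose list has size exactly equal to its degree — impossible for a general graph, and made possible here only by the dichotomy ``force $x$ and $y$ to forbid the same color at $v$, or push a forbidden color out of $L(v)$'', which is precisely what fails for cycles and complete graphs. The second delicate point is making the block reduction airtight: one must verify that the shortened-list subgraphs arising there stay within the scope of the argument — in particular that such a subgraph is never a group Gallai tree with every list tight — which is where a structural analysis of the end-blocks of $\Delta$-regular graphs is needed.
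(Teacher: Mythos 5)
First, note that the paper does not prove this statement at all: Theorem~\ref{thm:brook.list} is imported from the literature (it is attributed to Chuang \emph{et al.}~\cite{Chuang}), so there is no in-paper proof to compare against; your attempt has to be judged on its own. Much of it is sound. The greedy argument for $\chi_{gl}(G)\le\Delta(G)+1$ is correct, as is the ``if'' direction via $\chi_g\le\chi_{gl}$ and Theorem~\ref{thm:brook}. The 2-connected regular case is also handled correctly and rather nicely: the Lov\'asz-type choice of $v,x,y$ with $xy\notin E(G)$ and $G-\{x,y\}$ connected, combined with your dichotomy (either force $x$ and $y$ to forbid the same element at $v$, or use $|L(x)|=|L(v)|=\Delta$ to push a forbidden value outside $L(v)$), is exactly the right way to make the last vertex colorable in the group-list setting, and the case analysis there is airtight.

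The genuine gap is the cut-vertex (block-reduction) step. There you invoke, without proof, the statement that a connected graph which is not a ``group Gallai tree'' (all blocks complete or cycles) admits an $(A,L,f)$-coloring whenever $|L(u)|\ge d(u)$ for all $u$. This group analogue of the Borodin/Erd\H{o}s--Rubin--Taylor degree-choosability theorem is precisely the main content of the result you are trying to prove (it is, in essence, what \cite{Chuang} establishes), and it does not follow from anything you have shown; your 2-connected argument does not yield it, because after deleting $z$ the graph $B-z$ has \emph{all} lists tight (vertices not adjacent to $z$ still have degree $\Delta$ in $B-z$), so the ``basic tool'' and the $\Delta$-uniform 2-connected argument no longer apply as stated. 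In addition, even granting that lemma, you still owe the structural verification you yourself flag: that $B-z$ (with the shortened lists) is never an exceptional configuration. So as written the proof is complete only for graphs with no cut vertex in the regular case; closing it requires either proving the group degree-choosability lemma by its own induction on blocks, or reworking the cut-vertex case so that a version of your 2-connected argument tolerating one precolored vertex applies to the end-block $B$ directly. Until one of these is done, the ``only if'' direction is not established.
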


In this paper, we aim to introduce a group version of edge coloring and list edge coloring. Recall that the line graph of a graph $G$, denoted by $\mathcal{L}(G)$, is a graph such that each vertex of $\mathcal{L}(G)$ represents an edge of $G$ and two vertices of $\mathcal{L}(G)$ are adjacent if and only if their corresponding edges share a common endpoint in $G$. For an edge $uv\in E(G)$, we use $e_{uv}$ to denote the vertex in $\li(G)$ that represents $uv$ in $G$. Clearly, the edge chromatic number $\chi'(G)$ of a graph $G$ is equal to the vertex chromatic number $\chi(\mathcal{L}(G))$ of its line graph $\mathcal{L}(G)$.
In view of this, the group version of edge coloring and list edge coloring can be defined naturally. For an Abelian group $A$ of order at least $k$, we say $G$ is group $A$-edge-colorable if $\mathcal{L}(G)$ is group $A$-colorable and say $G$ is group $k$-edge-choosable if $\mathcal{L}(G)$ is group $k$-choosable.
By $\chi'_{g}(G)=\chi_g(\mathcal{L}(G))$ and $\chi'_{gl}(G)=\chi_{gl}(\mathcal{L}(G))$, we denotes the group edge chromatic number and the group edge choice number of a graph $G$. First of all, we have the following basic theorem.

\begin{thm}\label{thm:basic.edge}
For any connected simple graph $G$, $$\Delta(G)\leq \chi'_{g}(G)\left\{
                                                \begin{array}{ll}
                                                  =\chi'_{gl}(G)=2, & \hbox{if $G$ is a path;} \\
                                                  =\chi'_{gl}(G)=3, & \hbox{if $G$ is a cycle;} \\
                                                  \leq \chi'_{gl}(G)\leq 2\Delta(G)-2, & \hbox{if $\Delta(G)\geq 3$.}
                                                \end{array}
                                              \right.
$$
\end{thm}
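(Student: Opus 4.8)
The plan is to push every quantity into the line graph $\mathcal{L}(G)$ and then feed it to the group-coloring Brooks theorems already available. Two routine facts will be used throughout: $\chi_g$ and $\chi_{gl}$ are monotone under taking subgraphs (extend the prescribed edge-function arbitrarily to the larger graph, colour it, and restrict), and $\chi_g(H)\le\chi_{gl}(H)$ for every graph $H$ since a group $k$-choosable graph is group $A$-colorable for each Abelian group $A$ with $|A|\ge k$ (assign all of $A$ as the list at every vertex). Granting these, the middle inequality $\chi'_g(G)\le\chi'_{gl}(G)$ is immediate, and the lower bound $\Delta(G)\le\chi'_g(G)$ follows because the $\Delta(G)$ edges incident with a maximum-degree vertex span a copy of $K_{\Delta(G)}$ inside $\mathcal{L}(G)$, so $\chi'_g(G)=\chi_g(\mathcal{L}(G))\ge\chi_g(K_{\Delta(G)})=\Delta(G)$ by Theorem \ref{thm:brook}.

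For the path and cycle cases I would only use $\mathcal{L}(P_n)=P_{n-1}$ and $\mathcal{L}(C_n)=C_n$. A nonempty path has group choice number $2$ by Theorem \ref{thm:brook.list} (it is the complete-graph equality case when it is $K_2$, and otherwise it is neither a cycle nor a complete graph, so $\chi_{gl}\le\Delta=2$, with the reverse inequality coming from a $K_2$ subgraph), hence $\chi'_g(P_n)=\chi'_{gl}(P_n)=2$; and since $C_n$ is exactly an equality case of Theorems \ref{thm:brook} and \ref{thm:brook.list}, $\chi'_g(C_n)=\chi'_{gl}(C_n)=\Delta(C_n)+1=3$.

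The substantive case is $\Delta(G)\ge 3$. Here $G$ has an edge and is connected, so $\mathcal{L}(G)$ is connected; moreover every vertex $e_{uv}$ of $\mathcal{L}(G)$ has degree $d_G(u)+d_G(v)-2\le 2\Delta(G)-2$, so $\Delta(\mathcal{L}(G))\le 2\Delta(G)-2$. Applying Theorem \ref{thm:brook.list} to $\mathcal{L}(G)$: either $\chi'_{gl}(G)=\chi_{gl}(\mathcal{L}(G))\le\Delta(\mathcal{L}(G))\le 2\Delta(G)-2$ and we are done, or $\mathcal{L}(G)$ is a cycle or a complete graph. If $\mathcal{L}(G)$ is a cycle then $G$ is itself a cycle, contradicting $\Delta(G)\ge 3$. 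If $\mathcal{L}(G)=K_m$ then every two edges of $G$ share an endpoint; a short case check (three pairwise-intersecting edges either pass through a common vertex or form a triangle, and a fourth edge rules out the triangle) forces $G=K_{1,\Delta(G)}$, whence $\chi'_{gl}(G)=\chi_{gl}(K_{\Delta(G)})=\Delta(G)\le 2\Delta(G)-2$ using $\Delta(G)\ge 2$.

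The hard part is precisely this last paragraph: the naive application of the Brooks-type bound to $\mathcal{L}(G)$ gives only $\chi'_{gl}(G)\le\Delta(\mathcal{L}(G))+1\le 2\Delta(G)-1$, so gaining the extra unit rests entirely on the equality characterization in Theorem \ref{thm:brook.list} together with the elementary fact that $\mathcal{L}(G)$ can be complete only when $G$ is a star (or a triangle, excluded here by $\Delta(G)\ge 3$). I do not expect any other obstacle.
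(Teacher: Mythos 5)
Your proof is correct and follows essentially the same route as the paper: pass to $\mathcal{L}(G)$ and apply Theorems \ref{thm:brook} and \ref{thm:brook.list}, using that $\mathcal{L}(G)$ is a path for a path, a cycle for a cycle, and has maximum degree at most $2\Delta(G)-2$ otherwise. In fact your treatment of the case $\mathcal{L}(G)$ complete is more careful than the paper's own argument, which claims that $\Delta(G)\geq 3$ forces $G$ to be neither a cycle nor a star --- false for $K_{1,n}$ with $n\geq 3$, whose line graph is $K_n$; your explicit check that this star case still gives $\chi'_{gl}(G)=\Delta(G)\leq 2\Delta(G)-2$ repairs that small oversight.
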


\begin{proof}
Since $\chi'_{gl}(G)\geq \chi'_g(G)\geq \chi'(G)\geq \Delta(G)$, the left inequality in above theorem holds. If $G$ is a path (resp. cycle), then $\mathcal{L}(G)$ is also a path (resp.  cycle). So by Theorems \ref{thm:brook} and \ref{thm:brook.list}, we have $\chi'_g(G)=\chi'_{gl}(G)=2~({\rm resp.}~3)$. If $\Delta(G)\geq 3$, then $G$ is neither a cycle nor a star, which implies $\mathcal{L}(G)$ is neither a cycle nor a complete graph. So $\chi'_{g}(G)\leq \chi'_{gl}(G)=\chi_{gl}(\mathcal{L}(G))\leq \Delta(\mathcal{L}(G))\leq 2\Delta(G)-2$ by Theorems \ref{thm:brook} and \ref{thm:brook.list}.
\end{proof}

\noindent From Theorem \ref{thm:basic.edge}, we can find that $\chi'_{g}(G)\leq \chi'_{gl}(G)\leq \Delta(G)+1$ for every graph with maximum degree 3 and $\chi'_{g}(G)=\chi'_{gl}(G)$ for every graph with maximum degree 2. These evidences motivate us to conjecture the analogue of Vizing's Theorem on edge chromatic number and list edge coloring Conjecture on edge choice number.

\begin{conj}\label{conj:Vizing.group}
For any simple graph $G$, $\Delta(G)\leq \chi'_g(G)\leq \Delta(G)+1$.
\end{conj}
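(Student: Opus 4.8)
The lower bound $\Delta(G)\le\chi'_g(G)$ is immediate, since $\chi'_g(G)\ge\chi'(G)\ge\Delta(G)$ as already recorded in Theorem~\ref{thm:basic.edge}; the whole content is therefore the upper bound $\chi'_g(G)\le\Delta(G)+1$, the group analogue of Vizing's theorem. The plan is to prove this by induction on $|E(G)|$, following the architecture of Vizing's original argument but carried out inside the line graph $\mathcal{L}(G)$ with the difference constraints $c(e)-c(e')\ne f(ee')$ in place of the classical $c(e)\ne c(e')$. Fix an Abelian group $A$ with $|A|\ge\Delta(G)+1$, an orientation $D$ of $\mathcal{L}(G)$ and a function $f$. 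Deleting an edge $xy$ of $G$ gives $\mathcal{L}(G)-e_{xy}=\mathcal{L}(G-xy)$, which by induction has an $(A,f)$-colouring $c$; the task is to assign a value to $e_{xy}$, or to modify $c$ locally so that a value becomes available.

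The device that replaces \emph{colour missing at a vertex} is the following. For a vertex $v$ of $G$ with coloured incident edges $e_1,\dots,e_t$, each $e_i$ forbids for a prospective edge $e$ at $v$ exactly one value, namely $c(e_i)$ shifted by $\pm f$ with the sign dictated by $D$; call this set of at most $t$ values the blocked set at $v$, and call any element of $A$ outside it an \emph{admissible value at $v$}. Because $t\le\Delta(G)<\Delta(G)+1\le|A|$, every vertex has admissible values, exactly as every vertex has a missing colour in a classical $(\Delta+1)$-edge-colouring. I would then build a Vizing-type fan $y_0=y,y_1,\dots,y_k$ of distinct neighbours of $x$ in $G$ in which the value on $xy_{i+1}$ is admissible at $y_i$, and perform the \emph{fan rotation} $c(xy_i):=c(xy_{i+1})$ along a prefix. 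Each reassignment merely moves an already-admissible value onto an edge at $y_i$, so its effect on the difference constraints can be checked directly; I expect this step to carry over, although verifying it for every orientation and every $f$ is more delicate than in the classical case.

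The genuine obstacle is the recolouring that classically finishes the proof: one takes an admissible value $\alpha$ at $x$ and $\beta$ at $y_k$, forms the maximal $\alpha/\beta$ alternating walk from $x$, and swaps $\alpha\leftrightarrow\beta$ along it to create a common admissible value. In the group setting there is no colour-swap symmetry, because interchanging two elements of $A$ along a path does not preserve a constraint of the form $c(e)-c(e')\ne f(ee')$: the function $f$ shifts each forbidden value independently. The substitute I would attempt is a \emph{translation along a path} --- add a fixed $t\in A$ to the values of all edges on one side of a chosen cut and leave the rest unchanged, which preserves every difference except those crossing the cut, where it changes by $\pm t$. The program is then to choose the cut (the analogue of the Kempe chain) and the shift $t$ so that exactly the one obstructing difference at $x$ is repaired while no new violation is created along the path; routing this translation so that its boundary meets $c$ in a single controllable conflict --- plausibly along a maximal walk on which consecutive blocked values are forced to agree --- is precisely where a new idea beyond the classical machinery is required, and is the main obstacle to closing the induction and hence to settling the conjecture in full generality.
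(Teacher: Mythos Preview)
The statement you are trying to prove is labelled a \emph{Conjecture} in the paper, and the paper does not offer a proof of it in general. What the paper actually establishes are partial cases: Lemma~\ref{lem:degree.sum} (an edge $uv$ with $d_G(u)+d_G(v)\le\Delta(G)+i+1$ cannot occur in a group $(\Delta(G)+i)$-edge-critical graph) immediately gives Corollary~\ref{cor:2-degenerate} for $2$-degenerate graphs, and Theorem~\ref{thm:planar} handles several classes of planar graphs by discharging. There is therefore no ``paper's own proof'' of Conjecture~\ref{conj:Vizing.group} to compare your attempt against.

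Your proposal is honest about where it breaks, and the gap you isolate is exactly the reason the statement remains a conjecture. The classical Vizing argument relies on swapping two colours along a Kempe chain; in the group setting the constraint $c(e)-c(e')\ne f(ee')$ is not invariant under any permutation of $A$, so no Kempe-chain recolouring is available. Your proposed substitute, translating by a fixed $t\in A$ on one side of a cut, does preserve all differences \emph{inside} each side, but it shifts the difference on \emph{every} edge crossing the cut by $\pm t$, not just the one you wish to repair; since the values $f(ee')$ on those crossing edges are arbitrary and independent, there is in general no choice of $t$ and of cut that fixes the target conflict without creating new ones. This is not a detail to be cleaned up but the essential obstruction, and the paper itself flags a closely related failure at the end of Section~1 (the $3$-alternating-cycle trick from list edge colouring collapses because even cycles are not group $2$-edge-choosable). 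In short, your outline does not constitute a proof, and no such proof is known.
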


\begin{conj}\label{conj:list.group}
For any simple graph $G$, $\chi'_g(G)=\chi'_{gl}(G)$.
\end{conj}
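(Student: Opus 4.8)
The inequality $\chi'_g(G)\le\chi'_{gl}(G)$ is immediate and already recorded in Theorem~\ref{thm:basic.edge}: a group $A$-edge-colouring of $G$ with $|A|$ equal to the target value is exactly an $(A,L,f)$-colouring of $\li(G)$ in which every edge receives the full list $A$. So the content of Conjecture~\ref{conj:list.group} is the reverse bound $\chi'_{gl}(G)\le\chi'_g(G)$, which is the group analogue of the (still open) List Edge Colouring Conjecture $\chi'_l(G)=\chi'(G)$. I therefore do not expect a proof in full generality; the plan is to pursue the two routes below, note where each stalls, and record the partial statements that the rest of the paper can actually deliver.

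\textbf{First route: the polynomial method.} Fix an orientation $D$ of $\li(G)$ (permissible since group choosability does not depend on the orientation) and, for $f\in F(\li(G),A)$, consider $P_f(\mathbf{x})=\prod_{(e\to e')\in D}\bigl(x_e-x_{e'}-f(ee')\bigr)$. Since each $f(ee')$ is a constant, the homogeneous top-degree part of $P_f$ is the graph polynomial $\prod_{(e\to e')\in D}(x_e-x_{e'})$, \emph{independently of $f$}. Hence the Combinatorial Nullstellensatz obstruction to an $(A,L,f)$-colouring coincides with the one governing ordinary list colouring, and $\chi'_{gl}(G)$ is bounded above by the Alon--Tarsi number of $\li(G)$. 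Consequently every family for which $\chi'_l=\chi'$ has been established by an Eulerian-subgraph (Alon--Tarsi) certificate automatically satisfies $\chi'_{gl}=\chi'_g=\chi'$. The obstacle is that the Alon--Tarsi number of $\li(G)$ can exceed $\chi'_g(G)$, so the argument cannot close the gap whenever $\chi'_g(G)$ is small for a reason not visible to the polynomial method.

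\textbf{Second route: the kernel method.} Galvin's proof that $\chi'_l(G)=\Delta(G)$ for bipartite $G$ orients $\li(G)$ so that every induced subdigraph has a kernel and all out-degrees are below the list sizes, then invokes the Bondy--Boppana--Siegel lemma. To transplant this one needs a version of that lemma asserting that a kernel-perfect orientation with out-degrees $<|L(e)|$ forces an $(A,L,f)$-colouring \emph{for every} $f$. I expect this to be the main obstacle: the usual alternating-path/exchange proof of the kernel lemma relies on each vertex having to avoid a \emph{set} of colours, whereas here the constraint on $e\to e'$ is the asymmetric relation $x_e-x_{e'}\ne f(ee')$, and it seems delicate to run the exchange argument while controlling the shifts accumulated along cycles of $\li(G)$.

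\textbf{What is within reach.} For the classes actually treated in this paper — $2$-degenerate graphs and planar graphs without adjacent short cycles — I would not attack Conjecture~\ref{conj:list.group} head-on but instead prove the \emph{choosability} bounds $\chi'_{gl}(G)\le\Delta(G)+1$, respectively $\chi'_{gl}(G)\le\Delta(G)$, by the minimal-counterexample and discharging arguments developed in the following sections. Since group choosability dominates group colourability, each such bound simultaneously pins down $\chi'_g(G)$, so the conjecture holds for every graph in these families; the general case, however, appears to be at least as hard as the List Edge Colouring Conjecture itself.
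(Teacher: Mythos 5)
You are right that this statement is a conjecture and that no proof in full generality should be expected; the paper itself leaves it open and only confirms it for planar graphs with maximum degree $\Delta(G)\geq\Delta\geq 3$ and girth at least $4+\lceil\frac{8}{\Delta-2}\rceil$, and it does so exactly along the lines of the sound half of your closing paragraph: it proves the sharp bound $\chi'_{gl}(G)=\Delta(G)$ by a critical-subgraph-plus-discharging argument, which squeezes $\chi'_g(G)$ between the trivial lower bound $\Delta(G)$ and $\chi'_{gl}(G)$ and thereby forces equality. The genuine gap is in your claim that the bounds $\chi'_{gl}(G)\le\Delta(G)+1$ for $2$-degenerate graphs and for planar graphs without adjacent short cycles also ``pin down $\chi'_g(G)$'' and hence settle the conjecture for those families. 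They do not: from $\Delta(G)\le\chi'_g(G)\le\chi'_{gl}(G)\le\Delta(G)+1$ one only learns that both parameters lie in $\{\Delta(G),\Delta(G)+1\}$, and it remains entirely possible that $\chi'_g(G)=\Delta(G)$ while $\chi'_{gl}(G)=\Delta(G)+1$. This is precisely why the paper credits those $\Delta+1$ results to Conjecture \ref{conj:Vizing.group} only, and reserves Conjecture \ref{conj:list.group} for the class where the exact value $\Delta(G)$ is attained.

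A secondary caveat concerns your polynomial route: the colours here are elements of an arbitrary Abelian group $A$ of the required order, which in general carries no field (or even ring) structure, so $x_e-x_{e'}-f(ee')$ is not a polynomial over a field and the Combinatorial Nullstellensatz/Alon--Tarsi coefficient argument does not apply as stated; at best it covers groups of prime order. Since you offer that route only as a heuristic with acknowledged obstacles this does not damage your conclusions, but the obstruction is more fundamental than the one you name, and any attempt to salvage partial cases of the conjecture this way would have to address it first.
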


In the next section, we will confirm Conjecture \ref{conj:Vizing.group} for all 2-degenerate graphs and some planar graphs without adjacent short cycles and confirm Conjecture \ref{conj:list.group} for some planar graphs with large girth and maximum degree.

For a nonnegative integer $i$, we call a graph $G$ is group $(\Delta(G)+i)$-edge-critical if $\chi'_{gl}(G)>\Delta(G)+i$ but $\chi'_{gl}(H)\leq \Delta(H)+i$ for every proper subgraph $H\subset G$. The $(\Delta(G)+i)$-edge-critical graph in terms of list edge coloring can be defined similarly. In most of the articles concerning list $(\Delta+1)$-edge coloring of planar graphs in the literature such as \cite{Cohen} and \cite{Hou2009}, it was proved and essential that a $3$-alternating cycle $C$ can not appear in a $(\Delta+1)$-edge-critical graph $G$ because if such a cycle $C$ do exist, then $G-E(C)$ is $(\Delta+1)$-edge choosable and every edge of $C$ has at least two available colors since it is incident with $\Delta(G)+1$ edges, of which $\Delta(G)-1$ are colored, which implies that one can extend the list $(\Delta+1)$-edge coloring of $G-E(C)$ to $G$ by the fact that even cycles are 2-edge-choosable. However, this technique is invalid for group edge choosability since any cycle is not group $2$-edge-choosable by Theorem \ref{thm:basic.edge}.

\section{Main results and their proofs}

We begin with this section by proving an useful Lemma, which will be frequently used in the next proofs and implies Conjecture \ref{conj:Vizing.group} holds for all 2-degenerate graphs.

\begin{lem}\label{lem:degree.sum}
Let $i$ be a nonnegative integer and $G$ be a group $(\Delta(G)+i)$-edge-critical graph. Then $G$ is connected and $d_G(u)+d_G(v)\geq \Delta(G)+i+2$ for any edge $uv\in E(G)$.
\end{lem}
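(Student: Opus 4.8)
The plan is to argue by contradiction, exploiting the minimality that comes with edge-criticality. First I would show connectedness: if $G$ were disconnected, then each component $H$ is a proper subgraph, so $\chi'_{gl}(H)\leq\Delta(H)+i\leq\Delta(G)+i$, and since $\mathcal{L}(G)$ is the disjoint union of the line graphs $\mathcal{L}(H)$ (edges in different components share no endpoint), a group $(\Delta(G)+i)$-edge-colouring can be assembled componentwise — contradicting $\chi'_{gl}(G)>\Delta(G)+i$. So $G$ is connected.

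For the degree-sum inequality, suppose there is an edge $uv\in E(G)$ with $d_G(u)+d_G(v)\leq\Delta(G)+i+1$. Consider $H=G-uv$, which is a proper subgraph, hence group $(\Delta(G)+i)$-edge-choosable; note $\Delta(H)\leq\Delta(G)$. Fix an Abelian group $A$ of order at least $\Delta(G)+i$, a $(\Delta(G)+i)$-uniform list assignment $L$ on $\mathcal{L}(G)$, an orientation, and a weight function $f$ on $E(\mathcal{L}(G))$. Restricting $L$ and $f$ to $\mathcal{L}(H)$, we obtain an $(A,L,f)$-colouring $c$ of $\mathcal{L}(H)$; the only vertex of $\mathcal{L}(G)$ left uncoloured is $e_{uv}$. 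The neighbours of $e_{uv}$ in $\mathcal{L}(G)$ are precisely the edges of $G$ incident with $u$ or $v$ other than $uv$ itself, so there are at most $(d_G(u)-1)+(d_G(v)-1)=d_G(u)+d_G(v)-2\leq\Delta(G)+i-1$ of them. Each such neighbour $e$ forbids at most one value for $c(e_{uv})$ (namely $c(e)-f$ or $c(e)+f$ on the relevant directed edge, depending on orientation). Since $|L(e_{uv})|\geq\Delta(G)+i$ exceeds the number of forbidden values, a valid colour remains for $e_{uv}$, extending $c$ to a group $(\Delta(G)+i)$-edge-colouring of $G$ — a contradiction.

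The argument is essentially routine bookkeeping; the only point requiring care is the translation between the group edge-colouring language for $G$ and the group vertex-colouring language for $\mathcal{L}(G)$, and in particular verifying that each already-coloured neighbour of $e_{uv}$ kills at most one candidate value, which is exactly the defining constraint $c(x)-c(y)\neq f(xy)$ solved for the unknown endpoint. There is also the minor subtlety that the definitions of group colourability are orientation-independent (by Lai--Zhang, as recalled in the introduction), so we are free to delete the vertex $e_{uv}$, colour $\mathcal{L}(H)$, and then reinsert $e_{uv}$ as a "last" vertex without worrying about how the orientation was chosen. Once these points are in place the inequality $d_G(u)+d_G(v)\geq\Delta(G)+i+2$ follows immediately.
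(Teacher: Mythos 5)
Your proposal is correct and follows essentially the same route as the paper: delete the edge $uv$, invoke criticality (plus the monotonicity of group choosability in the list size) to colour $\mathcal{L}(G-uv)$, and then extend to $e_{uv}$ by counting that its at most $d_G(u)+d_G(v)-2\leq\Delta(G)+i-1$ coloured neighbours each forbid at most one value from a list of size $\Delta(G)+i$. The only difference is cosmetic: you spell out the componentwise argument for connectedness and the orientation bookkeeping ($c(e)-f$ versus $c(e)+f$), which the paper compresses by citing the definition and a ``without loss of generality'' choice of orientation.
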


\begin{proof}
The connectivity of $G$ directly follows from its definition. Suppose there is an edge $uv\in E(G)$ such that $d_G(u)+d_G(v)\leq \Delta(G)+i+1$. Then for an Abelian group $A$ of order at least $\Delta(G)+i$,  a $(\Delta(G)+i)$-uniform list assignment $L: V(\mathcal{L}(G))\mapsto 2^A$  and a function $f\in F(\mathcal{L}(G),A)$, $\mathcal{L}(G)$ is not $(A,L,f)$-colorable but $\mathcal{L}(G-uv)$ is. Let $c$ be an $(A,L,f)$-coloring of $\mathcal{L}(G-uv)$. Notice that now in $\mathcal{L}(G)$ the only uncolored vertex under $c$ is $e_{uv}$, which is adjacent to $m=d_G(u)+d_G(v)-2\leq \Delta(G)+i-1$ colored vertices, say $e_1,e_2,\cdots,e_m$. Without any loss of generality we assume $e_{uv}$ is the head of each edge $e_ie_{uv}$ in $\li(G)$ under a given orientation $D$ of $E(\li(G))$, where $1\leq i\leq m$. Now assign $e_{uv}$ a color in $S= L(e_{uv})-\bigcup_{i=1}^m\{c(e_i)-f(e_ie_{uv})\}$. Notice that $|S|\geq \Delta(G)+i-m\geq 1$. So we have extended $c$ to an $(A,L,f)$-coloring of $\li(G)$. This implies $G$ is group $(\Delta(G)+i)$-edge-choosable, a contradiction.
\end{proof}

\begin{cor}\label{cor:mini.degree}
Let $i$ be a nonnegative integer and $G$ be a group $(\Delta(G)+i)$-edge-critical graph. Then $\delta(G)\geq i+2$.
\end{cor}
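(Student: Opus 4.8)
The plan is to derive this directly from Lemma \ref{lem:degree.sum}. First I would observe that a group $(\Delta(G)+i)$-edge-critical graph necessarily contains at least one edge: an edgeless graph has an empty line graph and hence group edge choice number $0$, so it cannot satisfy $\chi'_{gl}(G) > \Delta(G)+i$. Thus $E(G) \neq \emptyset$, and moreover $G$ is connected by Lemma \ref{lem:degree.sum}.

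Next I would pick a vertex $v \in V(G)$ with $d_G(v) = \delta(G)$. Since $G$ is connected with at least one edge, $v$ has a neighbor $u$, so $uv \in E(G)$. Applying Lemma \ref{lem:degree.sum} to this edge gives $d_G(u) + d_G(v) \geq \Delta(G) + i + 2$. Combining this with the trivial bound $d_G(u) \leq \Delta(G)$ yields
$$
\delta(G) = d_G(v) \geq \Delta(G) + i + 2 - d_G(u) \geq \Delta(G) + i + 2 - \Delta(G) = i + 2,
$$
which is the desired inequality.

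There is essentially no obstacle here: the only point requiring a word of care is excluding the trivial edgeless (or single-vertex) case before invoking the lemma, and after that the bound $d_G(u)\le\Delta(G)$ does all the work. I would present the argument in the two short steps above.
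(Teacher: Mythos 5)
Your argument is correct and is exactly the intended derivation: the paper states this as an immediate consequence of Lemma \ref{lem:degree.sum}, combining $d_G(u)+d_G(v)\geq\Delta(G)+i+2$ with $d_G(u)\leq\Delta(G)$ for an edge at a minimum-degree vertex. Your extra care in ruling out the edgeless case before using connectedness is a reasonable (if routine) addition and does not change the approach.
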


\begin{cor}\label{cor:2-degenerate}
Every 2-degenerate graph is group $(\Delta(G)+1)$-edge-choosable.
\end{cor}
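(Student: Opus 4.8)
The plan is to argue by contradiction using Lemma \ref{lem:degree.sum}. Suppose some $2$-degenerate graph fails to be group $(\Delta(G)+1)$-edge-choosable; then among all such graphs pick one, $G$, minimizing the number of edges. Every proper subgraph $H\subset G$ is again $2$-degenerate (the class of $2$-degenerate graphs is closed under taking subgraphs) and has $\Delta(H)\le\Delta(G)$, so by minimality $H$ is group $(\Delta(H)+1)$-edge-choosable, hence also group $(\Delta(G)+1)$-edge-choosable since adding spurious colours to every list only makes choosability easier. Thus $G$ is group $(\Delta(G)+1)$-edge-critical in the sense of the paragraph preceding Lemma \ref{lem:degree.sum} (with $i=1$), and Lemma \ref{lem:degree.sum} applies.

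The key step is then to combine $2$-degeneracy with the degree-sum bound. Since $G$ is $2$-degenerate, $G$ itself has a vertex $v$ with $d_G(v)\le 2$. If $d_G(v)\le 1$, then $G$ being connected forces $G=K_2$, which is trivially group $2$-edge-choosable (its line graph is a single vertex), a contradiction; so $d_G(v)=2$. Let $uv\in E(G)$ be any edge at $v$. Lemma \ref{lem:degree.sum} with $i=1$ gives $d_G(u)+d_G(v)\ge\Delta(G)+3$, i.e. $d_G(u)\ge\Delta(G)+1$, which is impossible since no vertex can have degree exceeding $\Delta(G)$. This contradiction completes the proof.

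I expect essentially no obstacle here: the only points needing a word of care are the two sanity checks — that $2$-degeneracy passes to the minimal counterexample's proper subgraphs and is inherited by $G$ itself (so that a low-degree vertex genuinely exists in $G$, not merely in a subgraph), and that enlarging lists preserves group $k$-edge-choosability so the minimality argument is legitimate. Both are immediate from the definitions, after which the contradiction with $\Delta(G)$ is a one-line arithmetic consequence of Lemma \ref{lem:degree.sum}.
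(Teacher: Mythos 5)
Your argument is correct and is essentially the paper's intended route: pass to a minimal (hence group $(\Delta(G)+1)$-edge-critical) counterexample and play Lemma~\ref{lem:degree.sum} against the low-degree vertex guaranteed by $2$-degeneracy. One small slip: a connected graph with a vertex $v$ of degree at most $1$ need not be $K_2$ (any tree shows this), but the case is disposed of by the very bound you already use, since $d_G(u)+d_G(v)\ge\Delta(G)+3$ applied to a pendant edge $uv$ forces $d_G(u)\ge\Delta(G)+2$; even more directly, Corollary~\ref{cor:mini.degree} gives $\delta(G)\ge 3$ for the critical graph, which contradicts the fact that every subgraph of a $2$-degenerate graph has a vertex of degree at most $2$, making the whole corollary a one-line consequence.
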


\begin{thm} \label{thm:planar}
Let $G$ be a planar graph such that $G$ does not contain an $i$-cycle adjacent to a $j$-cycle where $3\leq i\leq s$ and $3\leq j\leq t$. If\\
\indent(1) $s=3$, $t=3$ and $\Delta(G)\geq 8$, or\\
\indent(2) $s=3$, $t=4$ and $\Delta(G)\geq 6$, or\\
\indent(3) $s=4$, $t=5$ and $\Delta(G)\geq 5$, or \\
\indent(4) $s=4$, $t=7$,\\
then $G$ is group $(\Delta(G)+1)$-edge-choosable.
\end{thm}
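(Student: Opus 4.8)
The plan is a minimal‑counterexample argument followed by discharging, using Lemma~\ref{lem:degree.sum} as the only reducible configuration — as the remark after Conjecture~\ref{conj:list.group} explains, the $3$‑alternating‑cycle reduction that powers the list‑edge‑colouring literature is unavailable for group colouring, which is why the hypotheses here must be stronger. Suppose the theorem fails and let $G$ be a counterexample minimizing $|V(G)|+|E(G)|$; since every proper subgraph $H$ satisfies $\chi'_{gl}(H)\le\Delta(H)+1\le\Delta(G)+1$, the graph $G$ is group $(\Delta(G)+1)$‑edge‑critical, and we fix a plane embedding. If $\Delta(G)\le 3$ then $\Delta(\mathcal{L}(G))\le 4$ and an easy check via Theorems~\ref{thm:basic.edge} and~\ref{thm:brook.list} gives $\chi'_{gl}(G)=\chi_{gl}(\mathcal{L}(G))\le\Delta(G)+1$; so we may assume $\Delta(G)\ge 4$. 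Next one observes that the four cycle‑adjacency hypotheses nest — that of case (1) is implied by that of (2), implied by that of (3), implied by that of (4) — whence case (2) for $\Delta(G)\ge 8$ reduces to case (1), case (3) for $\Delta(G)\ge 6$ to case (2), and case (4) for $\Delta(G)\ge 5$ to case (3). So it remains to treat only $\Delta(G)=8$ in case (1), $\Delta(G)\in\{6,7\}$ in case (2), $\Delta(G)=5$ in case (3), and $\Delta(G)=4$ in case (4), and in each of these the strength of the forbidden‑adjacency condition is calibrated to the weakness of the degree‑sum bound available for that $\Delta(G)$.

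By Lemma~\ref{lem:degree.sum} and Corollary~\ref{cor:mini.degree}, $G$ is connected, $\delta(G)\ge 3$, and $d_G(u)+d_G(v)\ge\Delta(G)+3$ for every edge $uv$; hence the $3$‑vertices form an independent set, every neighbour of a $3$‑vertex has degree $\Delta(G)$, every neighbour of a $4$‑vertex has degree $\ge\Delta(G)-1$, and on each triangle at most one vertex has degree $\le\lfloor\Delta(G)/2\rfloor+1$. After a short argument reducing to the case that $G$ is $2$‑connected — so that every face is bounded by a cycle — the hypothesis says precisely that a $3$‑face shares no edge with a $3$‑face (case (1)), with a $3$‑ or $4$‑face (case (2)), with a $3$‑, $4$‑ or $5$‑face (case (3), and likewise for $4$‑faces), or with a $3$‑, $4$‑, $5$‑, $6$‑ or $7$‑face (case (4), and likewise for $4$‑faces). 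Thus triangles are surrounded by large faces in cases (2)--(4), while in case (1) a triangle still carries at least two vertices of degree $\ge\lfloor\Delta(G)/2\rfloor+2$. These facts, together with planarity, are everything available.

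Now discharge: set $\mu(x)=d_G(x)-4$ for each vertex and each face $x$, so that $\sum_x\mu(x)=-8$ by Euler's formula; the only deficient elements are $3$‑vertices and $3$‑faces, with charge $-1$, while $\Delta(G)$‑vertices and $k$‑faces with $k\ge 5$ have surplus $\Delta(G)-4$ and $k-4$. The redistribution uses rules of four shapes — $\Delta(G)$‑vertex to adjacent $3$‑vertex; large vertex to incident triangle; large face to incident $3$‑vertex; large face to adjacent small face — with a case‑dependent subset active and case‑dependent fractions, arranged so that each $3$‑vertex recovers its unit from its three $\Delta(G)$‑neighbours (and, for small $\Delta(G)$, also from the large faces forced to border whatever small face contains it) and each $3$‑face recovers its unit either from its $\ge 2$ high‑degree vertices (case (1)) or from the large faces flanking it (cases (2)--(4)). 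One then checks no element becomes negative, the two key points being: at any vertex no two consecutive incident faces are triangles, so a $\Delta(G)$‑vertex lies on at most $\lfloor\Delta(G)/2\rfloor$ triangles and its number of $3$‑neighbours plus its number of incident triangles is at most $\Delta(G)$, making its total outgo a linear function of $\Delta(G)$ that stays at most $\Delta(G)-4$ exactly at the case's threshold; and a large face $f$ that bounds many triangles must carry correspondingly few $3$‑vertices (e.g.\ if every edge of $f$ bounds a triangle then $f$ has no $3$‑vertex on it, since a $3$‑vertex of $f$ would need an incident $f$‑edge not bounding a triangle). Summing the final charges yields a nonnegative total, contradicting $\sum_x\mu(x)=-8$, so no such $G$ exists.

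The crux, and the main obstacle, is exactly the design and verification of these discharging rules, and it is tightest in case (4), where $\Delta(G)=4$: there $4$‑vertices have surplus $0$ and every $3$‑vertex is surrounded by $4$‑vertices, so all positive charge must flow out of faces — indeed out of faces of degree $\ge 8$, because the strong condition $s=4$, $t=7$ is precisely what forces every face flanking a $3$‑ or $4$‑face to have degree at least $8$. The borderline donors are the faces of degree $8$ and $9$, for which one must push through the trade‑off between how many adjacent triangles a face must feed and how many $3$‑vertices it carries on its boundary. Lesser technicalities are the reduction to the $2$‑connected case and the treatment of $3$‑vertices lying on a small face and of $4$‑vertices lying on two triangles, where the surplus per incidence is smallest; all of these dissolve under the same local bookkeeping.
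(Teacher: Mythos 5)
Your overall strategy is the paper's own: take a minimal counterexample, so that by Lemma~\ref{lem:degree.sum} it is a connected group $(\Delta(G)+1)$-edge-critical plane graph with $\delta\geq 3$ and degree sums $\geq \Delta(G)+3$ on edges, dispose of $\Delta(G)\leq 3$ via Theorem~\ref{thm:basic.edge}, and then discharge against Euler's formula. The decisive gap is that the discharging itself --- which is the entire mathematical content of this theorem --- is never carried out. You describe only the \emph{shapes} of the rules (``case-dependent subset active and case-dependent fractions, arranged so that \ldots'') and then assert ``one then checks no element becomes negative,'' while yourself conceding that the design and verification of these rules is ``the crux, and the main obstacle.'' The paper's proof consists precisely of four explicit rule sets (R1.1--R1.2, R2.1--R2.4, R3.1--R3.2, R4.1--R4.2) with explicit constants, a different charge normalization in case (2) (there $c(v)=2d_G(v)-10$, $c(f)=3d_G(f)-10$, with a face-to-face transfer to $3$-faces), and a case-by-case verification for every vertex and face degree; nothing in your sketch certifies, for instance, that a borderline $8$-face in case (4) can simultaneously feed the $3$-vertices on its boundary and remain nonnegative --- asserting that the trade-off works out is not a proof that it does. (Your claim that in case (4) all positive charge must come from $8^+$-faces is also inaccurate: $5$-, $6$-, $7$-faces must donate to $3$-vertices that lie on no small face.)

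Two further concrete problems. First, your case reduction is logically broken as stated: the nesting of the adjacency hypotheses only lets you push a \emph{stronger} hypothesis at large $\Delta$ down to a weaker case, so case (1) must still be proved for every $\Delta(G)\geq 8$, not just $\Delta(G)=8$; moreover your reductions of cases (2)--(4) at large $\Delta$ rest on exactly those instances you propose to skip (e.g.\ a graph satisfying the case (2) hypothesis with $\Delta(G)=12$ is sent to case (1) with $\Delta(G)=12$, which your plan never treats). This is harmless only if, like the paper, you verify each case for all $\Delta(G)$ above its threshold. Second, the ``short argument reducing to the case that $G$ is $2$-connected'' is not supplied and is doubtful in this setting: group list colorings cannot be permuted, and combining colorings of the two sides of a cut vertex amounts to a precoloring-extension problem on the mutually adjacent edges at that vertex, which criticality does not give you. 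The paper sidesteps this entirely by allowing non-simple faces and counting incidences with the multiplicity $m_v(f)$; you should do the same rather than rest the face analysis on an unproved $2$-connectivity claim.
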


\begin{proof}
The proof is carried out by contradiction and discharging. Suppose $G$ is a minimum counterexample to the theorem. Then by Lemma \ref{lem:degree.sum}, one can easily find that $G$ is a connected and group $(\Delta(G)+1)$-edge-critical planar graph with $\delta(G)\geq 3$.

By Euler's Formula, for any $n>2m>0$, we have
\begin{equation}\label{eq:euler}
\sum_{v\in V(G)}[(\frac{n}{2}-m)d_G(v)-n]+\sum_{f\in F(G)}(md_G(f)-n)=-2n<0.
\end{equation}
Assign each vertex $v\in V(G)$ an initial charge $c(v)=(\frac{n}{2}-m)d_G(v)-n$ and each face $f\in F(G)$ an initial charge $c(f)=md_G(f)-n$. Then by (\ref{eq:euler}), we have $\sum_{x\in V(G)\cup F(G)}c(x)<0$. To prove the theorem, we are ready to construct a new charge function $c'$ on $V(G)\cup F(G)$ according some defined discharging rules, which only move charge around but do not affect the total charges, so that after discharging the final charge $c'(x)$ of each element $x\in V(G)\cup F(G)$ is nonnegative. This contradiction completes the proof of the theorem in final. In the following, we call a face $f\in F(G)$ is simple if the boundary of $f$ is a cycle and denote $m_v(f)$ to be the number of times through $v$ by a face $f$ in clockwise order. Obviously, if $v$ is a non-cut vertex or $f$ is a simple face, then $m_v(f)=1$.

(1) Let $S$ be the set of 3-vertices, 4-vertices and 5-vertices in $G$. By Lemma \ref{lem:degree.sum}, we can claim that $S$ forms an independent set in $G$ since $\Delta(G)\geq 8$. Now we choose $m=2$ and $n=6$ in (\ref{eq:euler}) and define the discharging rules as follows:\\
\indent \textbf{R1.1}. From each $4^+$-face $f$ to its incident vertex $v\in S$, transfer $m_v(f)$.\\
\indent \textbf{R1.2}. From each $8^+$-vertex $u$ to its adjacent 3-vertex $v$, transfer $\frac{1}{2}$ if $uv$ is incident with a 3-cycle.\\
Without any loss of generality, we always assume $v$ is a non-cut vertex and $f$ is simple in the following arguments (because during the calculational part of discharging, the case when $v$ is a cut vertex that is incident with a non-simple face $f$ is equivalent to the case when $v$ is incident with $m_v(f)$ simple faces with the same degree of $f$, and the  case when $f$ is a non-simple face that is incident with a cut vertex $v$ is equivalent to the case when $f$ is incident with $m_v(f)$ non-cut vertices with the same degree of $v$). Suppose $d_G(v)=3$. Then by Lemma \ref{lem:degree.sum}, $v$ is adjacent to three $8^+$-vertices. If $v$ is incident with a 3-face, then $v$ is also incident with two $4^+$-faces since there are no two adjacent $3$-cycles in $G$. This implies $c'(v)\geq c(v)+2\times\frac{1}{2}+2\times 1=0$ by R1.1 and R1.2. If $v$ is not incident with any 3-faces, then $c'(v)\geq c(v)+3\times 1=0$ by R1.1. Suppose $4\leq d_G(v)\leq 5$. One can easy show that $v$ is incident with at least two $4^+$-faces, which implies $c'(v)\geq c(v)+2\times 1=0$. Suppose $6\leq d_G(v)\leq 7$. Then it is easy to see $w'(v)=w(v)\geq 0$. Suppose $d_G(v)\geq 8$. Notice that any two 3-cycles are not adjacent in $G$, so $G$ is incident with at most $\lfloor\frac{d_G(v)}{2}\rfloor$ 3-faces, which implies $v$ may transfer charges to at most $\lfloor\frac{d_G(v)}{2}\rfloor$ 3-vertices by R1.2 since any 3-vertices are not adjacent in $G$ either. So we have $c'(v)\geq d_G(v)-6-\frac{1}{2}\lfloor\frac{d_G(v)}{2}\rfloor\geq 0$ for $d_G(v)\geq 8$. Suppose $d_G(f)=3$. Then it is trivial that $c'(f)=c(f)=0$. Suppose $d_G(f)\geq 4$. Then $f$ may transfer charges to at most $\lfloor\frac{d_G(f)}{2}\rfloor$ vertices by R1.1 since $S$ is an independent set in $G$. This implies $c'(f)\geq 2d_G(f)-6-\lfloor\frac{d_G(f)}{2}\rfloor\geq 0$ for $d_G(f)\geq 4$ in final.

(2) We choose $m=3$ and $n=10$ in (\ref{eq:euler}) and define the discharging rules as follows:\\
\indent \textbf{R2.1}. From each $6^+$-vertex to its adjacent 3-vertex, transfer $\frac{1}{3}$.\\
\indent \textbf{R2.2}. From each $4$-face $f$ to its incident vertex $v$, transfer $m_v(f)$ if $d_G(v)=3$, $\frac{1}{2}m_v(f)$ if $d_G(v)=4$.\\
\indent \textbf{R2.3}. From each $5^+$-face $f$ to its incident vertex $v$, transfer $\frac{3}{2}m_v(f)$ if $d_G(v)=3$, $m_v(f)$ if $d_G(v)=4$.\\
\indent \textbf{R2.4}. From each $5^+$-face to its adjacent $3$-face, transfer $\frac{1}{3}$.\\
Suppose $d_G(v)=3$. Then by Lemma \ref{lem:degree.sum}, $v$ is adjacent to three $6^+$-vertices since $\Delta(G)\geq 6$. If $v$ is incident with a 3-face, then $v$ is also incident with two $5^+$-face by the condition in the theorem. This implies $c'(v)\geq c(v)+3\times\frac{1}{3}+2\times\frac{3}{2}=0$ by R2.1 and R2.3. If $v$ is incident with no $3$-faces, then $v$ is incident with three $4^+$-face, which implies $c'(v)\geq c(v)+3\times\frac{1}{3}+3\times 1=0$ by R2.1, R2.2 and R2.3. Suppose $d_G(v)=4$. If $v$ is incident with a 3-face, then $v$ is incident with at least two $5^+$-faces, which implies $c'(v)\geq c(v)+2\times 1=0$ by R2.3. If $v$ is incident with no $3$-faces, then $v$ is incident with four $4^+$-faces, which implies $c'(v)\geq c(v)+4\times \frac{1}{2}=0$ by R2.2 and R2.3. Suppose $d_G(v)=5$. Then it is easy to see $c'(v)=c(v)=0$. Suppose $d_G(v)\geq 6$. Then by R2.1, we have $c'(v)\geq 2d_G(v)-10-\frac{1}{3}d_G(v)\geq 0$. Suppose $d_G(f)=3$. Then by the condition of the theorem $f$ is adjacent to three $5^+$-faces, implying $c'(f)\geq c(f)+3\times \frac{1}{3}=0$ by R2.4. Suppose $d_G(f)\geq 4$. Then $f$ is incident with at most $\lfloor\frac{d_G(f)}{2}\rfloor$ $4^-$-vertices since there is no adjacent $4^-$-vertices in $G$ by Lemma \ref{lem:degree.sum}. This implies $c'(f)\geq c(f)-2\times 1=0$ for $d_G(f)=4$ by R2.2, and $c'(v)\geq 3d_G(f)-10-\frac{1}{3}d_G(f)-\frac{3}{2}\lfloor\frac{d_G(f)}{2}\rfloor>0$ for $d_G(f)\geq 5$ by R2.3 and R2.4.

(3) We choose $m=2$ and $n=6$ in (\ref{eq:euler}) and define the discharging rules as follows:\\
\indent \textbf{R3.1}. From each $5$-face $f$ to its incident vertex $v$, transfer $m_v(f)$ if $d_G(v)=3$, $\frac{1}{2}m_v(f)$ if $d_G(v)=4$, $\frac{1}{5}m_v(f)$ if $d_G(v)=5$.\\
\indent \textbf{R3.2}. From each $6^+$-face $f$ to its incident vertex $v$, transfer $\frac{3}{2}m_v(f)$ if $d_G(v)=3$, $m_v(f)$ if $d_G(v)=4$, $\frac{1}{3}m_v(f)$ if $d_G(v)=5$.\\
Suppose $d_G(v)=3$. If $v$ is incident with a $4^-$-face, then $v$ is also incident with two $6^+$-faces by the condition of the theorem, which implies by R3.2 that $c'(v)\geq c(v)+2\times \frac{3}{2}=0$. If $v$ is incident with no $4^-$-faces, then by R3.1 and R3.2 we have $c'(v)\geq c(v)+3\times 1=0$. Suppose $d_G(v)=4$. If $v$ is incident with a $4^-$-face, then $v$ is incident with at least two $6^+$-faces, which implies $c'(v)\geq c(v)+2\times 1=0$ by R3.2. If $v$ is incident with no $4^-$-faces, then by R3.1 and R3.2 we also have $c'(v)\geq c(v)+4\times \frac{1}{2}=0$. Suppose $d_G(v)=5$. If $v$ is incident with at least one $4^-$-face, then $v$ is incident with either three $6^+$-faces implying $c'(v)\geq c(v)+3\times\frac{1}{3}=0$ by R3.2, or two $5^+$-faces and two $6^+$-faces implying $c'(v)\geq c(v)+2\times\frac{1}{5}+2\times\frac{1}{3}>0$ by R3.1 and R3.2. If $v$ is incident with no $4^-$-faces, then by R3.1 and R3.2 we still have $c'(v)\geq c(v)+5\times\frac{1}{5}=0$. Suppose $d_G(v)\geq 6$ or $3\leq d_G(f)\leq 4$. Then it is clear that $c'(v)=c(v)\geq 0$ and $c'(f)=c(f)\geq 0$. Suppose $d_G(f)=5$. If $f$ is incident with no 3-vertices, then by R3.1 we have $c'(f)\geq c(f)-5\times\frac{1}{2}>0$. If $f$ is incident with at leat one 3-vertex, note that any $3$-vertex can not be adjacent to a $4^-$-vertex in $G$ by Lemma \ref{lem:degree.sum}, so $f$ is also incident with at least two $5^+$-vertices. This implies $c'(f)\geq c(f)-2\times\frac{1}{5}-3\times 1>0$ by R3.1. Suppose $d_G(f)\geq 6$. Then we shall have $d_G(f)-n_3-n_4\geq n_3$ by Lemma \ref{lem:degree.sum} since $\Delta(G)\geq 5$, where $n_i$ denotes the number of $i$-vertices that are incident with $f$ in $G$. This implies by R3.2 that $c'(f)\geq 2d_G(f)-6-\frac{3}{2}n_3-n_4-\frac{1}{3}(d_G(f)-n_3-n_4)=d_G(f)-6-\frac{2}{3}(2n_3+n_4-d_G(f))+\frac{1}{6}n_3\geq 0$ in final.

(4) We shall assume $\Delta(G)\geq 4$ in this part because the cases when $\Delta(G)\leq 3$ have been proved in Theorem \ref{thm:basic.edge}. Now we also choose $m=2$ and $n=6$ in (\ref{eq:euler}) and define the discharging rules as follows:\\
\indent \textbf{R4.1}. From each face $f$ of degree between 5 and 7 to its incident vertex $v$, transfer $m_v(f)$ if $d_G(v)=3$, $\frac{1}{2}m_v(f)$ if $d_G(v)\geq 4$.\\
\indent \textbf{R4.2}. From each $8^+$-face $f$ to its incident vertex $v$, transfer $\frac{3}{2}m_v(f)$ if $d_G(v)=3$, $m_v(f)$ if $d_G(v)\geq 4$.\\
Note that the above discharging rules are highly similar to the ones in part (3). So by a same analysis as in the previous part, one can also check that $c'(v)\geq 0$ for all $v\in V(G)$ and $c'(f)\geq 0$ for $3\leq d_G(f)\leq 4$. Now we shall only consider $5^+$-faces. Note that any $3$-vertices can not be adjacent in $G$ by Lemma \ref{lem:degree.sum} because we have already assumes $\Delta(G)\geq 4$. Thus $n_3\leq \lfloor\frac{d_G(f)}{2}\rfloor$ for any $f\in F(G)$, where $n_3$ is defined similarly as in part (3). Suppose $5\leq d_G(f)\leq 7$. Then by R4.1, we can deduce that $c'(f)\geq 2d_G(f)-6-n_3-\frac{1}{2}(d_G(f)-n_3)\geq \frac{3}{2}d_G(f)-6-\frac{1}{2}\lfloor\frac{d_G(f)}{2}\rfloor\geq 0$. Suppose $d_G(f)\geq 8$. We still have $c'(f)\geq 2d_G(f)-6-\frac{3}{2}n_3-1\times (d_G(f)-n_3)\geq d_G(f)-6-\frac{1}{2}\lfloor\frac{d_G(f)}{2}\rfloor\geq 0$ by R4.2 in final. This completes the proof of the theorem.
\end{proof}

As an immediately corollary of Theorem \ref{thm:planar}, we have the following two results.

\begin{cor}
Every planar graph with girth $g(G)\geq 5$ is group $(\Delta(G)+1)$-edge-choosable.
\end{cor}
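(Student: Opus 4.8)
The plan is to obtain this directly from Theorem \ref{thm:planar}, specifically from its fourth case. First I would observe that if $G$ is a planar graph with $g(G)\geq 5$, then $G$ contains no cycle of length $3$ and no cycle of length $4$ at all. In particular, $G$ trivially contains no $i$-cycle adjacent to a $j$-cycle with $3\leq i\leq 4$ and $3\leq j\leq 7$, since there are simply no $3$-cycles or $4$-cycles present; hence the hypothesis of Theorem \ref{thm:planar}(4) (the case $s=4$, $t=7$, which carries \emph{no} restriction on $\Delta(G)$) is satisfied vacuously. Applying Theorem \ref{thm:planar} then yields that $G$ is group $(\Delta(G)+1)$-edge-choosable, which is exactly the assertion.

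I would choose case (4) rather than cases (1)--(3) precisely because case (4) imposes no lower bound on the maximum degree, so no separate treatment of small-$\Delta$ graphs (paths, single cycles, forests, and the like) is needed; were one to argue via cases (1)--(3) instead, one would have to patch in those low-degree cases by hand, which would only be a detour through Theorem \ref{thm:basic.edge}. Thus there is essentially no obstacle here: the whole content of the corollary is the elementary implication ``$g(G)\geq 5$ $\Rightarrow$ $G$ has no short cycles in the relevant range, a fortiori no adjacent ones,'' and all the substantive work has already been carried out in the proof of Theorem \ref{thm:planar}.
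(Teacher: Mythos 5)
Your derivation is correct and matches the paper's intent: the corollary is stated as an immediate consequence of Theorem \ref{thm:planar}, and your use of case (4) ($s=4$, $t=7$, no degree hypothesis), with the observation that girth at least $5$ excludes all $3$- and $4$-cycles so the adjacency condition holds vacuously, is exactly the intended argument.
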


\begin{cor}
Every planar graph with girth $g(G)\geq 4$ and maximum degree $\Delta(G)\geq 6$ is group $(\Delta(G)+1)$-edge-choosable.
\end{cor}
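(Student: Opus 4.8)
The plan is to obtain this as a direct specialization of Theorem~\ref{thm:planar}, case~(2). Recall that case~(2) asserts group $(\Delta(G)+1)$-edge-choosability for every planar graph $G$ that contains no $3$-cycle adjacent to a $3$-cycle and no $3$-cycle adjacent to a $4$-cycle, provided $\Delta(G)\ge 6$. So it suffices to verify that a planar graph $G$ with $g(G)\ge 4$ and $\Delta(G)\ge 6$ satisfies both hypotheses of that case.

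The forbidden-adjacency hypothesis is immediate: if $g(G)\ge 4$, then $G$ has no cycle of length $3$ at all, so \emph{a fortiori} it has no $3$-cycle sharing an edge with another $3$-cycle and no $3$-cycle sharing an edge with a $4$-cycle. Hence the condition ``$G$ does not contain an $i$-cycle adjacent to a $j$-cycle with $3\le i\le 3$ and $3\le j\le 4$'' holds vacuously. The maximum-degree hypothesis $\Delta(G)\ge 6$ is assumed outright. Therefore Theorem~\ref{thm:planar}(2) applies and yields that $G$ is group $(\Delta(G)+1)$-edge-choosable.

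Because the deduction is a one-line appeal to the previous theorem, there is no genuine obstacle here; all the work resides in the discharging argument proving Theorem~\ref{thm:planar}. The only points worth a moment's thought are degenerate inputs: a forest has $g(G)=+\infty\ge 4$, but forests are $1$-degenerate and hence already covered by Corollary~\ref{cor:2-degenerate}; and the stipulation $\Delta(G)\ge 6$ removes any need to worry about small maximum degree, for which one would otherwise fall back on Theorem~\ref{thm:basic.edge}. One could note in passing that the companion statement for $g(G)\ge 5$ follows in the same manner from case~(4) of Theorem~\ref{thm:planar}, there being no $3$- or $4$-cycles to worry about and no constraint on $\Delta(G)$ in that case.
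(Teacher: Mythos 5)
Your proposal is correct and coincides with the paper's intent: the corollary is stated there as an immediate consequence of Theorem~\ref{thm:planar}, and your one-line specialization to case~(2) (the adjacency condition holding vacuously since $g(G)\ge 4$ excludes $3$-cycles, with $\Delta(G)\ge 6$ assumed) is exactly that argument.
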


Another interesting topic concerting group edge colorings and list group edge colorings is to determine which class of graphs satisfies $\chi'_g(G)=\chi'_{gl}(G)$. In view of this, we end this paper by proving the following theorem, which confirms Conjecture \ref{conj:list.group} for some planar graphs with large girth and maximum degree.

\begin{thm}
Let $G$ be a planar graph with maximum degree $\Delta(G)\geq \Delta\geq 3$. If $g(G)\geq 4+\lceil\frac{8}{\Delta-2}\rceil$, then $\chi'_g(G)=\chi'_{gl}(G)=\Delta(G)$.
\end{thm}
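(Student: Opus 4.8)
The plan is to reduce everything to showing $\chi'_{gl}(G)\le\Delta(G)$: together with $\Delta(G)\le\chi'_g(G)\le\chi'_{gl}(G)$ from Theorem~\ref{thm:basic.edge}, this forces $\chi'_g(G)=\chi'_{gl}(G)=\Delta(G)$. It is convenient to fix an integer $k\ge 3$ and prove, by induction on the number of edges, the statement $P(k)$: \emph{every planar graph $H$ with $\Delta(H)\le k$ and $g(H)\ge 4+\lceil 8/(k-2)\rceil$ satisfies $\chi'_{gl}(H)\le k$}. The theorem is then $P(\Delta(G))$ applied to $G$, since $\Delta\le\Delta(G)$ gives $\lceil 8/(\Delta-2)\rceil\ge\lceil 8/(\Delta(G)-2)\rceil$, so the girth hypothesis is inherited. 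For the inductive step I would take a counterexample $G$ to $P(k)$ with fewest edges; running the argument of Lemma~\ref{lem:degree.sum} with $k$ colours in place of $\Delta(G)+i$ (the only input needed is that every proper subgraph of $G$ has group edge choice number at most $k$, which holds by minimality) shows that $G$ is connected, $\delta(G)\ge 2$, and $d_G(u)+d_G(v)\ge k+2$ for every edge $uv$. In particular $G$ contains a cycle, so $g:=g(G)$ is finite and $g\ge 4+\lceil 8/(k-2)\rceil\ge 5$.

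For the discharging I would work only with vertices. Put $r=\frac{2g}{g-2}$ and assign to each vertex $v$ the charge $\mu(v)=d_G(v)-r$. Since $G$ is a connected planar graph of girth $g$, $|E(G)|\le\frac{g}{g-2}(|V(G)|-2)$, hence $\sum_v\mu(v)=2|E(G)|-r|V(G)|<0$. Because $g\ge 5$ we have $2<r\le\frac{10}{3}$, so $\mu(v)\ge 0$ for every vertex of degree $\ge 4$ and also for every $3$-vertex when $g\ge 6$; call $v$ \emph{poor} if $\mu(v)<0$, so a poor vertex has degree $2$, or degree $3$ and then $g=5$. Using $d_G(x)+d_G(y)\ge k+2$ together with the fact that a poor vertex has degree $\le 3$, one gets: no two poor vertices are adjacent, a poor $2$-vertex has both neighbours of degree $k=\Delta(G)$, and a poor $3$-vertex has all neighbours of degree $\ge k-1$. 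The single discharging rule is: every vertex $w$ sends $\frac{r-d_G(u)}{d_G(u)}$ to each poor neighbour $u$.

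It then remains to check the new charges $\mu'$ are nonnegative. A poor vertex of degree $d\in\{2,3\}$ has exactly $d$ neighbours and receives $d\cdot\frac{r-d}{d}=r-d$, ending at $0$; a poor vertex sends nothing, and a non-poor vertex with no poor neighbour keeps $\mu'(w)=d_G(w)-r\ge 0$. The substantive case is a non-poor vertex $w$ with a poor neighbour. Here the amount sent to any neighbour is at most $\frac{r-2}{2}=\frac{2}{g-2}$; if $w$ has a poor $2$-neighbour then $d_G(w)=k$, so
\[ \mu'(w)\ \ge\ k-r-k\cdot\tfrac{2}{g-2}\ =\ \tfrac{k(g-4)-2g}{g-2}\ \ge\ 0, \]
where the last inequality is exactly $(k-2)(g-4)\ge 8$, which is guaranteed by $g\ge 4+\lceil 8/(k-2)\rceil$. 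Otherwise every poor neighbour of $w$ has degree $3$, which forces $g=5$, hence $k\ge 10$ and $d_G(w)\ge k-1\ge 9$; each such neighbour then receives $\frac{r-3}{3}=\frac19$, so $\mu'(w)\ge d_G(w)-\frac{10}{3}-\frac{d_G(w)}{9}=\frac{8d_G(w)}{9}-\frac{10}{3}>0$. Thus $\sum_v\mu'(v)\ge 0$, contradicting $\sum_v\mu(v)<0$, and the induction is complete.

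The main obstacle is the last case of the discharging: showing that a $\Delta(G)$-vertex all of whose neighbours might be $2$-vertices does not over-spend. The scheme is arranged precisely so that this reduces to the hypothesis $(\Delta(G)-2)(g(G)-4)\ge 8$, and the structural facts that make a single round of vertex discharging suffice — poor vertices are pairwise non-adjacent, and their heavy neighbours are forced to have degree $\ge\Delta(G)-1$ — come directly from Lemma~\ref{lem:degree.sum}, so no face charges are needed. I expect the only mildly delicate point beyond this to be the boundary case $g=5$, which via the girth hypothesis can occur only when $\Delta(G)\ge 10$, and there every vertex of degree $\ge 4$ already has positive charge, so the $3$-vertices are easily absorbed.
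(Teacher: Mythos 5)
Your proposal is correct, and it reaches the conclusion by a noticeably different discharging scheme than the paper's, although the overall skeleton (minimal counterexample, the degree-sum condition $d_G(u)+d_G(v)\geq\Delta(G)+2$ from Lemma \ref{lem:degree.sum}, then discharging against Euler's formula) is the same. The paper keeps both vertex charges $2d_G(v)-6$ and face charges $d_G(f)-6$, and its two rules move charge from maximum-degree vertices and from faces to $2$-vertices, so the girth hypothesis enters through the verification that faces end up nonnegative; note that its rule R2 transfers $(\frac{6}{\Delta}-1)m_v(f)$, a quantity that changes sign at $\Delta=6$, and the face estimate there substitutes the maximum possible number of incident $2$-vertices, which is only safe when that transfer is nonnegative -- the delicate case being $g(G)=5$, $\Delta\geq 10$. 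You instead fold the face side of Euler's formula into the vertex charges via $|E(G)|\leq\frac{g}{g-2}(|V(G)|-2)$, take $\mu(v)=d_G(v)-\frac{2g}{g-2}$, and use a single rule in which the (necessarily high-degree, by Lemma \ref{lem:degree.sum}) neighbours pay off the poor vertices; the girth hypothesis then appears exactly as $(\Delta(G)-2)(g-4)\geq 8$ in the check for a $\Delta(G)$-vertex, and the $g=5$ boundary case (poor $3$-vertices, forcing $\Delta(G)\geq 10$) is handled explicitly and cleanly. What your route buys is that no face analysis is needed at all and the numerology is transparent; what the paper's route buys is initial charges that do not depend on $g$ and a format uniform with the discharging in Theorem \ref{thm:planar}. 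Your reduction to the statement $P(k)$ and the monotonicity $\lceil 8/(\Delta-2)\rceil\geq\lceil 8/(\Delta(G)-2)\rceil$ are fine, and the re-derivation of Lemma \ref{lem:degree.sum} for a minimum-edge counterexample (rather than a critical graph) is legitimate; the only cosmetic point is that connectivity and the absence of isolated vertices should be dismissed with a one-line remark (components and isolated vertices can be treated separately), but that is routine and not a gap.
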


\begin{proof}
We just need to prove $\chi'_{gl}(G)=\Delta(G)$ here. Suppose, to the contrary, that $G$ is a group $\Delta(G)$-edge-critical graph. Let $c(v)=2d_G(v)-6$ if $v\in V(G)$ and $c(f)=d_G(f)-6$ if $f\in V(G)$. Then by (\ref{eq:euler}), we have $\sum_{x\in V(G)\cup F(G)}c(x)<0$. Now we redistribute the charge of the vertices and faces of $G$ according the following discharging rules:\\
\indent \textbf{R1}. From each vertex of maximum degree to its adjacent 2-vertex, transfer $2-\frac{6}{\Delta}$.\\
\indent \textbf{R2}. From each face $f$ to its incident 2-vertex $v$, transfer $(\frac{6}{\Delta}-1)m_v(f)$.\\
We shall get a contradiction by proving $c'(x)\geq 0$ for every $x\in V(G)\cup F(G)$, where $c'(x)$ is the final charge of the element $x$ after discharging. Suppose $d_G(v)=2$. Then by Lemma \ref{lem:degree.sum}, the two neighbors of $v$ shall be both $\Delta(G)$-vertices, which implies $c'(v)\geq c(v)+2\times (2-\frac{6}{\Delta})+2\times (\frac{6}{\Delta}-1)=0$ by R1 and R2. Suppose $3\leq d_G(v)\leq \Delta(G)-1$ (if exists). Then it is clear that $c'(v)=c(v)\geq 0$. Suppose $d_G(v)=\Delta(G)$. Then by R1, one can easily deduce that $c'(v)\geq 2\Delta(G)-6-\Delta(G)(2-\frac{6}{\Delta})\geq 0$ since $\Delta(G)\geq \Delta$. Suppose $f$ is a face in $G$. Similarly as in the proof of Theorem \ref{thm:planar}, without loss of generality, we can assume $f$ is simple. Then by Lemma \ref{lem:degree.sum}, $f$ is incident with at most $\lfloor\frac{d_G(f)}{2}\rfloor$ 2-vertices. This implies by R2 that $c'(f)\geq d_G(f)-6-(\frac{6}{\Delta}-1)\lfloor\frac{d_G(f)}{2}\rfloor\geq \frac{3\Delta-6}{2\Delta}g(G)-6\geq \frac{3\Delta-6}{2\Delta}\cdot\frac{4\Delta}{\Delta-2}-6=0$ in final.
\end{proof}


\end{document}